\documentclass[12pt]{amsart}
\usepackage{amscd,amsmath,amsthm,amssymb,graphics}
\usepackage{pstcol,pst-plot,pst-3d}%\usepackage[T1]{fontenc}
\usepackage{lmodern,pst-node}%\usepackage{geometric}
\usepackage{multicol}
\usepackage{epic,eepic}
\usepackage{amsfonts,amssymb,amscd,amsmath,enumerate,verbatim}
\psset{unit=0.7cm,linewidth=0.8pt,arrowsize=2.5pt 4}
% for vertex a circle with radius 0.5 mm

% for fat lines
\newpsstyle{fatline}{linewidth=1.5pt}
\newpsstyle{fyp}{fillstyle=solid,fillcolor=verylight}
\definecolor{verylight}{gray}{0.97}
\definecolor{light}{gray}{0.9}
\definecolor{medium}{gray}{0.85}
\definecolor{dark}{gray}{0.6}

%    Absolute value notation

%    Blank box placeholder for figures (to avoid requiring any
%    particular graphics capabilities for printing this document).

\unitlength=0.7cm

%
%------    GENERAL MACROS    -----
%
% Standard rings and fields, affine and projective space
%
               % the font for N,Z,Q,R,C

%
%------------------------------------------------
% Symbols in "Fraktur"
%
\def\frk{\frak}               % font for "Fraktur"

\def\Phi{{\frk n}}
\def\Phi{{\frk N}}
%
%------------------------------------------------

% Small letters in bold
%

%
\def\opn#1#2{\def#1{\operatorname{#2}}} % to make operators
%------------------------------------------------
% Numerical invariants of rings, ideals, and modules
%
\opn\chara{char} \opn\length{\ell} \opn\pd{pd} \opn\rk{rk}
\opn\projdim{proj\,dim} \opn\injdim{inj\,dim} \opn\rank{rank}
\opn\depth{depth} \opn\grade{grade} \opn\height{ht}
\opn\embdim{emb\,dim} \opn\codim{codim}

\opn\Tr{Tr} \opn\bigrank{big\,rank}
\opn\superheight{superheight}\opn\lcm{lcm}
\opn\trdeg{tr\,deg}%\emph{
\opn\reg{reg} \opn\lreg{lreg} \opn\ini{in} \opn\lpd{lpd}
\opn\size{size}\opn\bigsize{bigsize}
\opn\cosize{cosize}\opn\bigcosize{bigcosize}
\opn\sdepth{sdepth}\opn\sreg{sreg}
\opn\link{link}\opn\fdepth{fdepth}
%------------------------------------------------
% Divisors
%
\opn\div{div} \opn\Div{Div} \opn\cl{cl} \opn\Cl{Cl}
%
%------------------------------------------------
% Subsets of the spectrum of a ring
%
\opn\Spec{Spec} \opn\Supp{Supp} \opn\supp{supp} \opn\Sing{Sing}
\opn\Ass{Ass} \opn\Min{Min}\opn\Mon{Mon} \opn\dstab{dstab} \opn\astab{astab}
\opn\Syz{Syz}
%
%------------------------------------------------
% Standard operations on ideals and modules
%
\opn\Ann{Ann} \opn\Rad{Rad} \opn\Soc{Soc}
%
%------------------------------------------------
% Linear algebra and homology, endo- and automorphisms
%
\opn\Im{Im} \opn\Ker{Ker} \opn\Coker{Coker} \opn\Am{Am}
\opn\Hom{Hom} \opn\Tor{Tor} \opn\Ext{Ext} \opn\End{End}
\opn\Aut{Aut} \opn\id{id}

\opn\nat{nat}
\opn\pff{pf}%   \pf exists already
\opn\Pf{Pf} \opn\GL{GL} \opn\SL{SL} \opn\mod{mod} \opn\ord{ord}
\opn\Gin{Gin} \opn\Hilb{Hilb}\opn\sort{sort}
\opn\initial{init}
\opn\ende{end}
\opn\height{ht}
\opn\type{type}
%
%------------------------------------------------
% Convexity
%
\opn\aff{aff} \opn\con{conv} \opn\relint{relint} \opn\st{st}
\opn\lk{lk} \opn\cn{cn} \opn\core{core} \opn\vol{vol}
\opn\link{link} \opn\star{star}\opn\lex{lex}
%------------------------------------------------
% Graded rings and Rees algebras
\opn\gr{gr}

%
%------------------------------------------------
% Polynomials and power series
%

\def\pot#1#2{#1[\kern-0.28ex[#2]\kern-0.28ex]}

%
%------------------------------------------------
% Direct and inverse limits
%
\opn\dirlim{\underrightarrow{\lim}}
\opn\inivlim{\underleftarrow{\lim}}
%
%
% Names with a meaning
%

%
%------------------------------------------------
%

\def\Implies{\ifmmode\Longrightarrow \else
        \unskip${}\Longrightarrow{}$\ignorespaces\fi}
\def\implies{\ifmmode\Rightarrow \else
        \unskip${}\Rightarrow{}$\ignorespaces\fi}
\def\iff{\ifmmode\Longleftrightarrow \else
        \unskip${}\Longleftrightarrow{}$\ignorespaces\fi}

\let\:=\colon
\newtheorem{Theorem}{Theorem}[section]
 \newtheorem{Lemma}[Theorem]{Lemma}
 \newtheorem{Corollary}[Theorem]{Corollary}
 \newtheorem{Proposition}[Theorem]{Proposition}

 \newtheorem{Example}[Theorem]{Example}

%
% We like the var forms of some greek letters (as taught in German schools)
%
\let\epsilon\varepsilon
\let\kappa=\varkappa
%
%           We print on A4 paper
%
\textwidth=15cm \textheight=22cm \topmargin=0.5cm
\oddsidemargin=0.5cm \evensidemargin=0.5cm \pagestyle{plain}
%
%           The pf environment of AMSART needs a little help
%
\def\qed{\ifhmode\textqed\fi
      \ifmmode\ifinner\quad\qedsymbol\else\dispqed\fi\fi}
\def\textqed{\unskip\nobreak\penalty50
       \hskip2em\hbox{}\nobreak\hfil\qedsymbol
       \parfillskip=0pt \finalhyphendemerits=0}
\def\dispqed{\rlap{\qquad\qedsymbol}}

%
% ------    END OF GENERAL MACROS    -------
\opn\dis{dis}
\def\pnt{{\raise0.5mm\hbox{\large\bf.}}}

\opn\Lex{Lex}

%-- macro for local cohomology-----------------------------

%-- macro for a complicated condition for the extended
%-- Hochster's formula

\begin{document}
% \dedicatory{Department of Mathematics, University of Kurdistan, P.O. Box: 416, Sanandaj,
%Iran. Shkarimi2011@yahoo.com and a\_mafi@ipm.ir.}
 \title{Unmixedness and arithmetic properties of matroidal ideals}

 \author {Hero Saremi and Amir Mafi*}

\address{Hero Saremi, Department of Mathematics, Sanandaj Branch, Islamic Azad University, Sanandaj, Iran.}
\email{hero.saremi@gmail.com}

\address{A. Mafi, Department of Mathematics, University of Kurdistan, P.O. Box: 416, Sanandaj,
Iran.}
\email{a\_mafi@ipm.ir}

\subjclass[2010]{13C40, 13F20, 13C13.}

\keywords{Arithmetical rank, Unmixed ideals, Matroidal ideals.\\
* Corresponding author}

\begin{abstract}
Let $R=k[x_1,...,x_n]$ be the polynomial ring in $n$ variables over a field $k$ and $I$ be a matroidal ideal of degree $d$.
In this paper, we study the unmixedness properties  and the arithmetical rank of $I$. Moreover, we show that $ara(I)=n-d+1$.
This answer to the conjecture that made by H. J. Chiang-Hsieh \cite[Conjecture]{C}.

\end{abstract}

\maketitle

\section*{Introduction}
Throughout this paper, we assume that $R=k[x_1,...,x_n]$ is the polynomial ring in $n$ variables over a field $k$ with the maximal ideal $\frak{m}=(x_1,...,x_n)$, $I$ a monomial ideal of $R$ and $G(I)$ the unique minimal monomial generators set of $I$.

 A monomial ideal $I$ generated in a single degree is {\it polymatroidal ideal} when it is satisfying in the following conditions: for all monomials $u,v\in G(I)$ with $\deg_{x_i}(u)>\deg_{x_i}(v)$, there exists an index $j$ such that $\deg_{x_j}(v)>\deg_{x_j}(u)$ and $x_j(u/x_i)\in I$ (see \cite{HH4} or \cite{HH1}). A square-free polymatroidal ideal is called a {\it matroidal ideal}. The product of polymatroidal ideals is again polymatroidal (see \cite[Theorem 5.3]{CH}). In particular each power of a polymatroidal ideal is polymatroidal. Also, $I$ is a polymatroidal ideal if and only if $(I:u)$ is a polymatroidal ideal for all monomial $u$ (see \cite[Theorem 1.1]{BH}). If $I$ is a matroidal ideal of degree $d$, then $\depth(R/J)=d-1$ and $\pd(R/I)=n-d+1$  \cite[Theorem 2.5]{C}. A monomial ideal $I$ is called unmixed if all prime ideals in $\Ass(R/I)$ have the same height. If $I$ is Cohen-Macaulay, i.e., the quotient ring $R/I$ is Cohen-Macaulay, then $I$ is unmixed.

Herzog and Hibi in \cite{HH2} proved that a polymartoidal $I$ is Cohen-Macaulay if and only if $I$ is a principal ideal, a Veronese ideal, or a square-free Veronese ideal. They gave a counter example which is an unmixed matroidal ideal but it is not Cohen-Macauly in the case $n=6$. Herzog and Hibi in \cite{HH2} leave as a problem the classification of all unmixed polymatroidal ideals. After that Vl$\check{a}$doiu in \cite{Vl} studied the unmixed polymatroidal ideals and he showed that an ideal of Veronese type is unmixed if and only if it is Cohen-Macaulay.

The {\it arithmetical rank} of $I$ is defined as follows:\\
$ara(I):=\min\{t\in\mathbb{N}:$ there exist $a_1,...,a_t\in R$ such that $\sqrt{(a_1,...,a_t)}=\sqrt{I}\}$.\\
Note that ideals with the same radical have the same aritmetical rank.
It is well-known that $\height(I)\leq ara(I)\leq\mu(I)$, where $\mu(I)$ is the minimal number of generators of $I$ and $\height$ is the height of $I$.
If $I$ is a square-free monomial ideal in $R$, then by using \cite{L} and the Auslander-Buchsbaum formula we have the following well-known inequalities:\\
$\height(I)\leq\pd(R/I)\leq ara(I)\leq\mu(I)$, where $\pd$ is the projective dimension of $I$.
In particular, $I$ is Cohen-Macaulay if and only if $\height(I)=\pd(R/I)$

An ideal $I$ is called {\it set-theoretic complete intersection}, when $ara(I)=\height(I)$.
Kimura, Terai and Yoshida in \cite{KTY} raised the following question:\\
Let $I$ be a square-free monomial ideal in $R$. When does $ara(I)=\pd(R/I)$ hold? In paricular, suppose tha $R/I$ is Cohen-Macaulay. When is $I$ a set-theoretic complete intersection?

A considerable number of studies have been made on this question (see, for example, \cite{B}, \cite{B1}, \cite{KTY} and \cite{C}).
The above question does not always hold as was shown by \cite{Y}.
Chiang-Hsieh in \cite{C} proved that if $I$ is a matroidal ideal of degree $d$, then $ara(I)=\pd(R/I)$ provided that one of the following conditions holds:\\
$(i)$ $I$ is square-free Veronese; $(ii)$ $I=J_1J_2...J_d$, where each $J_i$ is generated by $h$ distinct variables; $(iii)$ $d=2$.

In the end of her article, she proposed the following conjecture:

{\bf Conjecture:} Let $I$ be a full-supported matroidal ideal of degree $d$. Then $ara(I)=n-d+1$.

The main propose of this note is to study the unmixed properties and arithmetical rank of matroidal ideals.
Also, we give an affirmative answer to the above conjecture.

For any unexplained notion or terminology, we refer the reader to \cite{HH1} and \cite{V}.
Several explicit examples were  performed with help of the computer algebra systems Macaulay2 \cite{GS}.

\section{Unmixed matroidal ideals}
Let $I$ be a monomial ideal of $R$ and $G(I)=\{u_1,...,u_t\}$. Then we set $[n]=\{x_1,...,x_n\}$ and $\supp(I)=\cup_{i=1}^t\supp(u_i)$, where $\supp(u)=\{x_i: u=x_1^{a_1}...x_n^{a_n}, a_i\neq 0\}$. Throughout this paper we assume that all polymatrodal ideals are full-supported, that is, $\supp(I)=[n]$.

\begin{Lemma}\label{L1}
Let $I$ be a matroidal ideal of degree $d$ and let $x,y$ be two variables in $R$ such that $xy\nmid u$ for all $u\in G(I)$. Then $I:x=I:y$. In particular, if $xu\in G(I)$ for some monomial element $u$ of degree $d-1$, then $yu\in G(I)$.
\end{Lemma}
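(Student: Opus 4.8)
The plan is to translate the statement into the language of matroids and then to isolate a single \emph{exchange} (or \emph{swap}) claim that drives everything. Since $I$ is matroidal of degree $d$, its minimal generators $G(I)$, viewed as $d$-element subsets of $[n]$, are exactly the bases of a matroid $M$ of rank $d$ on the ground set $[n]$: the defining condition for square-free monomials says precisely that whenever $x_i\in u\setminus v$ for $u,v\in G(I)$ there is $x_j\in v\setminus u$ with $(u\setminus\{x_i\})\cup\{x_j\}\in G(I)$, which is the basis-exchange axiom. Full-supportedness means every variable lies in some basis, so no variable is a loop, and the hypothesis $xy\nmid u$ for all $u\in G(I)$ says that $\{x,y\}$ is contained in no basis, i.e.\ $\{x,y\}$ is dependent. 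A two-element dependent set with no loops is a circuit, so $x$ and $y$ are \emph{parallel} in $M$.

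The key claim I would prove is the \textbf{swap}: if $u\in G(I)$ with $x\mid u$ and $y\nmid u$, then $y(u/x)\in G(I)$; by symmetry of the hypothesis the same holds with $x,y$ interchanged. Conceptually this is the standard fact that parallel elements may be interchanged in any basis: if $u$ is a basis containing $x$ but not $y$, then the fundamental circuit of $y$ with respect to $u$ is forced to be $\{x,y\}$ (the unique circuit inside $u\cup\{y\}$ through $y$), and deleting $x$ from $u\cup\{y\}$ yields the basis $(u\setminus\{x\})\cup\{y\}=y(u/x)$. If one prefers to argue only from the stated polymatroidal axiom, choose among all generators containing $y$ one, say $v$, minimizing $|v\setminus u|$; since $x\in u\setminus v$ and $y\in v\setminus u$, if $v\neq y(u/x)$ there is some $x_i\in v\setminus u$ with $x_i\neq y$, and applying the exchange axiom to the pair $(v,u)$ at $x_i$ produces a generator still containing $y$ but meeting $u$ in one more element, contradicting minimality.

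Granting the swap, the equality $I:x=I:y$ is routine. For a monomial $w$ one has $w\in I:x$ iff some $g\in G(I)$ divides $xw$. If $x\nmid g$ then in fact $g\mid w$, so $g\mid yw$ and $w\in I:y$. If $x\mid g$ then $g/x\mid w$, and by the swap $y(g/x)\in G(I)$ divides $yw$, so again $w\in I:y$. Thus $I:x\subseteq I:y$, and the reverse inclusion follows by the symmetric argument, giving $I:x=I:y$. Finally, for the last assertion, suppose $xu\in G(I)$ with $\deg u=d-1$; the hypothesis forces $y\nmid u$, so $u\in I:x=I:y$, whence $yu\in I$. As $\deg(yu)=d$ equals the minimal degree of a generator, $yu$ must itself lie in $G(I)$.

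The only real obstacle is the swap claim; everything else is formal. I expect the cleanest writeup to invoke the parallel-element/fundamental-circuit fact, with the minimal-$|v\setminus u|$ exchange argument available as a self-contained alternative should one wish to argue directly from the polymatroidal definition rather than citing matroid theory.
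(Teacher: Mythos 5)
Your proof is correct, but it follows a genuinely different route from the paper's. The paper disposes of the lemma in one line of colon-ideal algebra: $(I:y)=((I:y):x)=(I:xy)=((I:x):y)=(I:x)$, citing only the hypothesis that $xy\nmid u$ for all $u\in G(I)$. The two middle equalities in that chain are formal identities, but the two outer ones are not: $(I:y)=((I:y):x)$ says exactly that $I:y=I:xy$, which is half of the statement being proved, and it genuinely requires matroidality --- for instance $I=(x_1x_3,x_2x_4)$ satisfies the divisibility hypothesis with $x=x_1$, $y=x_2$, yet $(I:x_2):x_1=(x_3,x_4)\neq(x_1x_3,x_4)=I:x_2$; likewise the full-support assumption is essential, since $I=(x_1x_2,x_1x_3)\subset k[x_1,\dots,x_4]$ with $y=x_4$ satisfies the hypothesis but not the conclusion. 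So the paper's one-liner silently leans on the exchange property at precisely the point your swap claim addresses, whereas you actually prove that claim: $\{x,y\}$ is a circuit, so the parallel elements $x,y$ can be exchanged in any basis, either by the fundamental-circuit argument or by your minimal-$\lvert v\setminus u\rvert$ descent, which correctly extracts the swap from the raw exchange axiom (the new generator keeps $y$, loses an element of $v\setminus u$, and gains one of $u\setminus v$). What each approach buys: the paper's formulation is compact, stays entirely inside colon algebra, and matches how the lemma is applied later (e.g. the colon chains in the proof of Theorem \ref{T1}); yours is longer but self-contained and complete, makes visible exactly where matroidality and full support enter, and establishes the stronger structural fact $y(u/x)\in G(I)$, from which both $I:x=I:y$ and the ``in particular'' clause fall out immediately.
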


\begin{proof}
Since $xy\nmid u$ for all $u\in G(I)$, we have $(I:y)=((I:y):x)=(I:xy)=((I:x):y)=(I:x)$.
This completes the proof.
\end{proof}

The following result was proved in \cite[Theorem 3.2]{C}, we give an easier proof.
\begin{Theorem}\label{T1}
Let $I$ be a matroidal ideal of degree $d=2$. Then there are subsets $S_1,...,S_m$ of $[n]$ such that the following conditions hold:
\begin{itemize}
\item[(i)] $m\geq 2$ and $|S_i|\geq 1$ for each $i$;
\item[(ii)]$S_i\cap S_j=\emptyset$ for $i\neq j$ and $\cup_{i=1}^mS_i=[n]$;
\item[(iii)] $xy\in I$ if and only if $x\in S_i$ and $y\in S_j$ for $i\neq j$;
\item[(iv)] $xy\notin I$ if and only if $x,y\in S_i$ for some $i$.
\end{itemize}
\end{Theorem}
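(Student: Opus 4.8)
The plan is to reinterpret the statement as saying that the ``non-edge'' relation among the variables is an equivalence relation, whose classes are the desired sets $S_i$. Since $I$ is square-free, full-supported, and generated in degree $2$, every element of $G(I)$ is a product $x_ix_j$ with $i\neq j$, and for two distinct variables $x,y$ one has $xy\in I$ if and only if $xy\in G(I)$. I also record the elementary fact that $z^2\notin I$ for every variable $z$, since no square-free degree-$2$ generator can divide $z^2$; this small observation will be the crux of the argument.

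First I would define a relation on the variables by declaring $x\sim y$ when $x=y$ or $xy\notin I$. Reflexivity and symmetry are immediate, so the entire content is transitivity. This is where Lemma \ref{L1} does the work: for distinct variables $x,y$, the condition $xy\notin I$ is exactly the hypothesis ``$xy\nmid u$ for all $u\in G(I)$'' of that lemma (because in degree $2$ a generator is divisible by $xy$ only if it equals $xy$), and hence $xy\notin I$ forces $I:x=I:y$.

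The hard part—transitivity—then reduces to a one-line computation, and I expect it to be the only real obstacle. Suppose $x,y,z$ are distinct with $xy\notin I$ and $yz\notin I$; I want to conclude $xz\notin I$. By the previous step $I:x=I:y$ and $I:y=I:z$, so $I:x=I:z$. If instead $xz\in I$, then $z\in I:x=I:z$, which says $z^2\in I$, contradicting $z^2\notin I$. Therefore $xz\notin I$, and $\sim$ is an equivalence relation.

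Finally I would let $S_1,\dots,S_m$ be the equivalence classes of $\sim$ and verify (i)--(iv). Property (ii) holds because equivalence classes partition $[n]$ into nonempty blocks, which simultaneously gives $|S_i|\ge 1$. For distinct $x,y$, the statement ``$xy\notin I$'' is by definition ``$x,y$ lie in a common class,'' which is (iv), and negating it yields (iii). For (i), since $I$ is nonzero and full-supported there is at least one generator $xy\in I$, whose two variables lie in different classes, so $m\ge 2$. The only point needing slight care is that the degenerate case $x=y$ remain consistent with the stated biconditionals, and this is automatic because $x^2\notin I$ matches the fact that $x$ lies in its own class.
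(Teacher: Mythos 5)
Your proof is correct, but it takes a genuinely different route from the paper's. The paper begins with $\frak{m}\notin\Ass(R/I)$ (which rests on $\depth(R/I)=d-1>0$ for matroidal ideals), writes $I=(I:\frak{m})=\bigcap_{i=1}^m(I:x_i)$ as an irredundant intersection, and defines $S_i=[n]\setminus G(I:x_i)$; disjointness of the $S_i$ is forced by irredundancy of the intersection, and (iii)--(iv) then follow from Lemma \ref{L1}. You instead define $x\sim y$ iff $x=y$ or $xy\notin I$, prove transitivity via Lemma \ref{L1} together with the observation that $z^2\notin I$ (your replacement for the paper's ``contradiction with minimality'' step), and take the $S_i$ to be the equivalence classes. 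Your reduction of the lemma's hypothesis ``$xy\nmid u$ for all $u\in G(I)$'' to ``$xy\notin I$'' in degree $2$ is valid, and all four properties do become nearly tautological once transitivity is established. Your argument is more elementary and self-contained: it needs neither the depth result nor the existence of an irredundant decomposition. What the paper's construction buys is the explicit identification $S_i=[n]\setminus G(I:x_i)$, tying each block to a colon ideal, hence to a minimal prime of $I$; this is exactly what is exploited right afterwards in Corollaries \ref{C1} and \ref{C2}, where unmixedness is rephrased as $|G(I:x_i)|=\height(I)$ for all $i$ and where $|\Ass(R/I)|=m$. With your construction, those later statements would require a short additional argument identifying your equivalence classes with the complements of the minimal primes, so if you intend your proof to substitute for the paper's, you should record that identification as well.
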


\begin{proof}
Since $\frak{m}\notin\Ass(R/I)$, we have $I=(I:\frak{m})$. Therefore, there exists $m\leq n$ such that $I=\cap_{i=1}^m(I:x_i)$ is a minimal intersection.
For $1\leq i\leq m$, we consider $S_i=[n]\setminus G(I:x_i)$. Thus $m\geq 2$ and $|S_i|\geq 1$ for each $i$. Hence $(i)$ holds.\\
$(ii)$ Suppose $y\in S_i\cap S_j$ for $i\neq j$. Then $y\notin G(I:x_i)\cup G(I:x_j)$  and so $yx_i, yx_j\notin I$ for $i\neq j$. Therefore $(I:x_i)=(I:x_iy)=(I:y)=(I:x_jy)=(I:x_j)$ and this is a contradiction.  Thus $S_i\cap S_j=\emptyset$ for $i\neq j$. If $y\in\cap_{i=1}^mG(I:x_i)$, then $y\in G(I:x_i)$ for all $1\leq i\leq m$ and so $yx_i\in I$. Therefore $y\in I=\cap_{i=1}^m(I:x_i)$ and this is a contradiction. Hence $\cap_{i=1}^mG(I:x_i)=\emptyset$ and so $\cup_{i=1}^mS_i=[n]$.
Thus $(ii)$ holds.\\
$(iii)$ If $xy\in I$, then by definition of $S_i$ it is clear that $x\in S_i$ and $y\in S_j$ for $i\neq j$.
Conversely, let $x\in S_i$ and $y\in S_j$ for $i\neq j$. Then $xx_i, yx_j\notin I$ for $i\neq j$ and so $(I:x_i)=(I:x)$ and $(I:y)=(I:x_j)$.
If $xy\notin I$, then by Lemma \ref{L1} $(I:x)=(I:y)$. Therefore $(I:x_i)=(I:x_j)$ and this is a contradiction. Thus $(iii)$ holds.\\
$(iv)$ If $x,y\in S_i$ for some $i$, then it is clear that $xy\notin I$. Conversely, if $xy\notin I$ then by Lemma \ref{L1} $(I:x)=(I:y)$ and so $x,y\in S_i$ for some $i$.
This completes the proof.
\end{proof}

In the following, we assume that $m$ is the number of $S_i$ as we use in the Theorem \ref{T1}.
\begin{Corollary}\label{C1}
Let $I$ be a matroidal ideal of degree $d=2$. Then $I$ is an unmixed ideal if and only if $m(n-\height(I))=n$.
In particular, if $n$ is a prime number and $I$ is an unmixed matroidal ideal of degree $d=2$, then $I$ is Cohen-Macaulay.
\end{Corollary}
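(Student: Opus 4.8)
The plan is to reduce the statement to a numerical condition on the partition produced by Theorem~\ref{T1}. Since $I$ is matroidal, it is a square-free monomial ideal, hence radical, so that $\Ass(R/I)=\Min(I)$; in particular $I$ is unmixed precisely when all of its minimal primes share a common height. Thus the first task is to read off the minimal primes of $I$ directly from the blocks $S_1,\dots,S_m$.

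I would do this as follows. For a variable $x\in S_i$, parts (iii) and (iv) of Theorem~\ref{T1} show that $xy\in I$ for exactly those variables $y$ lying outside $S_i$, and a short check then gives $(I:x)=P_i$, where $P_i=(x_j:x_j\notin S_i)$ is the monomial prime on the complement of $S_i$. Choosing one variable from each block yields the irredundant decomposition $I=\bigcap_{i=1}^m P_i$ already implicit in the proof of Theorem~\ref{T1}. Because the $S_i$ are nonempty and pairwise disjoint, $P_i\subseteq P_j$ forces $S_j\subseteq S_i$ and hence $i=j$, so the $P_i$ are pairwise incomparable and are exactly the $m$ minimal primes of $I$, with $\height P_i=n-|S_i|$.

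With the primes identified, the equivalence becomes pure arithmetic. Since $\height(I)=\min_i\height P_i=n-\max_i|S_i|$, we get $n-\height(I)=\max_i|S_i|$. If $I$ is unmixed, then all $|S_i|$ are equal, say to $s$; then $ms=n$ by (ii), and $n-\height(I)=s$, whence $m(n-\height(I))=ms=n$. Conversely, if $m(n-\height(I))=n$, then $m\max_i|S_i|=n=\sum_{i=1}^m|S_i|\le m\max_i|S_i|$, and equality forces every $|S_i|$ to equal $\max_i|S_i|$, so all blocks have the same size and $I$ is unmixed.

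The point needing the most care is the final clause rather than any hard estimate. If $n$ is prime and $I$ is unmixed, the common block size $s=n/m$ must divide the prime $n$; since $m\ge 2$ by (i), the only possibility is $m=n$ and $s=1$. Then each $S_i$ is a singleton, so $I$ is generated by all products $x_ax_b$ with $a\ne b$, that is, $I$ is the square-free Veronese ideal of degree $2$. By the Herzog–Hibi classification of Cohen–Macaulay polymatroidal ideals \cite{HH2}, this ideal is Cohen–Macaulay, which gives the stated conclusion.
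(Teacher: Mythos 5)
Your proof is correct, and for the main equivalence it follows essentially the paper's route: both arguments rest on the partition $S_1,\dots,S_m$ of Theorem~\ref{T1} and reduce unmixedness to the counting identity $\sum_{i=1}^m|S_i|=n$ (your ``max'' argument and the paper's proof by contradiction are the same estimate). What you add is the explicit identification $(I:x)=P_i=(x_j:x_j\notin S_i)$ for $x\in S_i$, together with the check that the $P_i$ are pairwise incomparable and hence are exactly the minimal (equivalently, associated) primes with $\height P_i=n-|S_i|$; the paper leaves this step implicit when it asserts that $I$ is unmixed if and only if $|G(I:x_i)|=\height(I)$ for all $i$, so your version is the more complete one. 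The only place you genuinely diverge is the prime-$n$ clause: the paper deduces $\height(I)=n-1$ from $m(n-\height(I))=n$ with $m\geq 2$, and then obtains Cohen-Macaulayness from $\depth(R/I)>0$ and the Auslander-Buchsbaum formula, since $\height(I)\leq\pd(R/I)=n-\depth(R/I)\leq n-1=\height(I)$; you instead observe that all blocks must be singletons, identify $I$ as the degree-$2$ square-free Veronese ideal, and invoke the Herzog-Hibi classification \cite{HH2}. Both are valid: the paper's argument is self-contained and avoids any classification theorem, whereas yours yields the sharper structural conclusion that an unmixed matroidal ideal of degree $2$ with $n$ prime is not merely Cohen-Macaulay but is precisely the square-free Veronese ideal.
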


\begin{proof}
By using Theorem \ref{T1}, $\cup_{i=1}^mS_i=[n]$ and $S_i=[n]\setminus G(I:x_i)$. Therefore $I$ is an unmixed ideal if and only if $|G(I:x_i)|=|G(I:x_j)|=\height(I)$ for all $i\neq j$. Since $|S_i|=n-|G(I:x_i)|$, it follows that if $I$ is unmixed then $m(n-\height(I))=n$.
Conversely, suppose that $m(n-\height(I))=n$. It is clear that $\height(I)\leq |G(I:x_i)|$ for all $i$. Let, by contrary, $I$ is not unmixed. Then there exists $1\leq i\leq m$ such that $\height(I)<|G(I:x_i)|$. Since $\cup_{i=1}^mS_i=[n]$ and $|S_i|=n-|G(I:x_i)|$ for all $i$, we have $n<m(n-\height(I))$ and this is a contradiction. Therefore $I$ is unmixed.\\
If $n$ is a prime number, then $\height(I)=n-1$. Since $\depth(R/I)>0$, by using the Auslander-Buchsbaum formula, it follows that $\height(I)=\pd(R/I)$. Therefore $I$ is Cohen-Macaulay.
\end{proof}

It is know that for $n=6$, there is a counter-example which is an unmixed matroidal ideal but it is not Cohen-Macaulay.
For $n=4$, we can consider $I=(x_1x_3,x_1x_4,x_2x_3,x_2x_4)$ such that $I$ is an unmixed matroidal ideal but it is not Cohen-Macaulay.

\begin{Corollary}\label{C2}
Let $I$ be an unmixed matroidal ideal of degree $d=2$. Then $|\Ass(R/I)|=m$. In particular, if $I$ is a square-free Veronese ideal, then  $|\Ass(R/I)|=n$.
\end{Corollary}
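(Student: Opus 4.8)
The plan is to make the primary decomposition of $I$ completely explicit from the partition $S_1,\dots,S_m$ produced in Theorem \ref{T1}, and then to invoke the fact that a square-free monomial ideal is radical, so that $\Ass(R/I)=\Min(R/I)$. The count $|\Ass(R/I)|=m$ will then fall out of the count of parts.

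First I would pin down the colon ideals. Recall from the proof of Theorem \ref{T1} that $S_i=[n]\setminus G(I:x_i)$, and note $x_i\in S_i$ since $x_i^2\notin I$. I claim that $(I:x_i)$ equals the monomial prime $P_i:=([n]\setminus S_i)$ generated by the variables lying outside $S_i$. Indeed, by part $(iii)$ the variables $x_b$ with $x_ix_b\in I$ are exactly those outside $S_i$, so every variable of $[n]\setminus S_i$ belongs to $(I:x_i)$. Conversely, any degree-$2$ generator $x_ax_b\in G(I)$ with $x_i\nmid x_ax_b$ has its two variables in distinct parts, so at least one of them, say $x_a$, lies outside $S_i$; then $x_ix_a\in I$ by $(iii)$, hence $x_a\in(I:x_i)$, and therefore $x_ax_b$ is not a minimal generator. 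Thus $(I:x_i)=P_i$ collapses to a monomial prime with no generator of degree $\geq 2$.

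Next I would use the minimal intersection $I=\cap_{i=1}^m(I:x_i)=\cap_{i=1}^m P_i$ coming from Theorem \ref{T1}. Because the $S_i$ are nonempty and pairwise disjoint, the complements $[n]\setminus S_i$ are pairwise distinct and pairwise incomparable, so the $P_i$ are pairwise distinct, pairwise incomparable monomial primes. Hence this is an irredundant decomposition of $I$ into primes and $\Min(R/I)=\{P_1,\dots,P_m\}$ with $|\Min(R/I)|=m$. Finally, since $I$ is a square-free monomial ideal it is radical, so it has no embedded primes and $\Ass(R/I)=\Min(R/I)$; therefore $|\Ass(R/I)|=m$. For the special case, a square-free Veronese ideal of degree $2$ contains $x_ix_j$ for all $i\neq j$, so by $(iv)$ no two distinct variables lie in a common part; each $S_i$ is a singleton, $m=n$, and $|\Ass(R/I)|=n$.

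I expect the only real subtlety to be the first step: verifying that $(I:x_i)$ has no minimal generator of degree $\geq 2$, i.e.\ that it genuinely collapses to the monomial prime $P_i$. This is exactly where the degree-$2$ hypothesis is used, since a degree-$2$ generator missing $x_i$ must reuse a variable that already pairs with $x_i$ inside $I$. I would also remark that unmixedness plays no role in the equality $|\Ass(R/I)|=m$ itself; it only serves to force the cardinalities $|S_i|$, and hence the heights of the $P_i$, to coincide, as recorded in Corollary \ref{C1}.
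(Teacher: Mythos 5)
Your proof is correct, and for the main count it supplies exactly the details the paper omits: the paper's entire argument for $|\Ass(R/I)|=m$ is the assertion that it is ``clear'' from Theorem \ref{T1} and Corollary \ref{C1}, and your explicit identification $(I:x_i)=([n]\setminus S_i)$, the pairwise incomparability of these primes, and the appeal to radicality of square-free monomial ideals (so that $\Ass(R/I)=\Min(R/I)$) is precisely the mechanism behind that assertion. Your verification that no degree-$2$ generator survives as a minimal generator of $(I:x_i)$ is sound; as a shortcut, one could also note that $(I:x_i)$ is polymatroidal by \cite[Theorem 1.1]{BH}, hence generated in a single degree, and since it contains variables it must be a monomial prime. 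Where you genuinely diverge from the paper is in two places. First, for the ``in particular'' claim the paper argues numerically: a square-free Veronese ideal is Cohen--Macaulay with $\height(I)=\pd(R/I)=n-1$, so Corollary \ref{C1}'s criterion $m(n-\height(I))=n$ forces $m=n$; you instead argue combinatorially from Theorem \ref{T1}(iv) that each part $S_i$ must be a singleton, which is more elementary and bypasses both Cohen--Macaulayness and the height computation. Second, your closing observation that unmixedness plays no role in the equality $|\Ass(R/I)|=m$ is correct and is a point the paper obscures by citing Corollary \ref{C1}: the count holds for every full-supported matroidal ideal of degree $2$ (for instance $I=(x_1x_2,x_1x_3)$ has $m=2=|\Ass(R/I)|$ although it is not unmixed), and unmixedness serves only to equalize the heights of the primes $([n]\setminus S_i)$.
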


\begin{proof}
By using Theorem \ref{T1} and Corollary \ref{C1}, it is clear $|\Ass(R/I)|=m$.
If $I$ is a square-free Veronese ideal of degree $d=2$, then $\height(I)=\pd(R/I)=n-1$. Thus by Corollary \ref{C1} we have $m=n$. This completes the proof.
\end{proof}

\begin{Theorem}\label{T2}
Let $I$ be a matroidal ideal of degree $d\geq 2$. Then $I$ is an unmixed ideal if and only if  $(I:x_i)$ is unmixed and $\height(I)=\height(I:x_i)$ for all $1\leq i\leq n$.
\end{Theorem}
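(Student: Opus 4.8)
The plan is to reduce the entire statement to a single identity comparing the associated primes of $R/I$ with those of the colon quotients $R/(I:x_i)$, namely
\[
\Ass(R/I)=\bigcup_{i=1}^n\Ass\bigl(R/(I:x_i)\bigr).
\]
Once this is available, both implications become routine bookkeeping about heights: unmixedness of $I$ says exactly that every prime appearing on the left has height $\height(I)$, whereas unmixedness of each $(I:x_i)$ together with the condition $\height(I:x_i)=\height(I)$ says the same for every prime appearing on the right, and the displayed set equality forces the two conditions to coincide.

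To establish the identity I would work with the natural $R$-linear map $\varphi_i\colon R/(I:x_i)\to R/I$ sending $r+(I:x_i)$ to $x_ir+I$. It is well defined and injective precisely because $x_ir\in I$ if and only if $r\in(I:x_i)$, and its image is the submodule $x_i(R/I)=(I+x_iR)/I$ of $R/I$. Thus $R/(I:x_i)$ is isomorphic to a submodule of $R/I$, which already gives $\Ass(R/(I:x_i))\subseteq\Ass(R/I)$ for every $i$. This containment settles the forward implication: if $I$ is unmixed, then each prime of $\Ass(R/(I:x_i))$ lies in $\Ass(R/I)$ and hence has height $\height(I)$, so every minimal and every embedded prime of $(I:x_i)$ has height $\height(I)$, i.e. $(I:x_i)$ is unmixed with $\height(I:x_i)=\height(I)$. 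Here one only uses that $\supp(I)=[n]$ and $d\geq 2$ force $x_i\notin I$, so that $(I:x_i)$ is a proper ideal actually possessing associated primes.

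For the reverse containment I would show that if $P\in\Ass(R/I)$ and $x_i\notin P$, then $P\in\Ass(R/(I:x_i))$. Writing $P=(I:z)$ for a suitable monomial $z$, the assumption $x_i\notin P$ gives $x_iz\notin I$, and a direct computation yields
\[
\Ann_R\bigl(z+(I:x_i)\bigr)=\bigl((I:x_i):z\bigr)=(I:x_iz)=(P:x_i)=P,
\]
where the last step uses that $P$ is prime and $x_i\notin P$. Hence $P\in\Ass(R/(I:x_i))$. What makes the union exhaust all of $\Ass(R/I)$ is that $\frak{m}\notin\Ass(R/I)$: since $I$ is matroidal of degree $d\geq 2$ we have $\depth(R/I)=d-1\geq 1$, so no associated prime equals $\frak{m}$, and therefore every $P\in\Ass(R/I)$ omits at least one variable $x_i$ and is recovered from the corresponding colon.

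With the identity in place the reverse implication follows at once: assuming each $(I:x_i)$ is unmixed with $\height(I:x_i)=\height(I)$, set $h:=\height(I)$; then every prime in every $\Ass(R/(I:x_i))$ has height $h$, so by the identity every $P\in\Ass(R/I)$ has height $h$ and $I$ is unmixed. The parts demanding care are the height bookkeeping rather than mere unmixedness—one must carry the single value $\height(I)$ through the set equality so as to obtain $\height(I:x_i)=\height(I)$ exactly, not just equal heights among the various colons—and the use of $\depth(R/I)=d-1\geq 1$ to guarantee $\frak{m}\notin\Ass(R/I)$. I expect the verification of the reverse containment, and in particular the cancellation of the factor $x_i$ that relies on $P$ being prime with $x_i\notin P$, to be the only genuinely non-formal step.
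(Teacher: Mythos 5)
Your proof is correct, and its skeleton matches the paper's: the forward implication via the embedding $R/(I:x_i)\hookrightarrow R/I$ given by multiplication by $x_i$ (the paper phrases this through the exact sequence $0\to R/(I:x_i)\to R/I\to R/(I,x_i)\to 0$), and the reverse implication via the identity $\Ass(R/I)=\bigcup_{i=1}^n\Ass(R/(I:x_i))$. The one genuine difference is in how that identity is obtained: the paper simply cites Lemma 11 of Trung's paper \cite{T}, whereas you prove it from scratch --- the containment $\supseteq$ from the embedding, and $\subseteq$ by writing $P=(I:z)$ for a monomial $z$, choosing $x_i\notin P$ (possible because $\depth(R/I)=d-1\geq 1$ rules out $\frak{m}$), and computing $((I:x_i):z)=(I:x_iz)=(P:x_i)=P$; note that the annihilator being a proper prime automatically forces $z\notin(I:x_i)$, so the argument is complete. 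This makes your proof self-contained where the paper's is not, and it makes visible exactly where the hypothesis $d\geq 2$ enters (it guarantees $\frak{m}\notin\Ass(R/I)$ and that each $(I:x_i)$ is proper), something the citation hides. One small simplification available to you: since matroidal ideals are square-free, hence radical, $\Ass(R/I)=\Min(I)$, so $\frak{m}\notin\Ass(R/I)$ follows already from $I\neq\frak{m}$, without invoking the depth formula.
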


\begin{proof}
$(\Longrightarrow)$ let $I$ be an unmixed matroidal ideal. By using \cite[Corollary 1.2]{BH} $(I:x_i)$ is a matroidal ideal.
From the exact sequence $$0\longrightarrow R/(I:x_i)\overset{x_i}\longrightarrow R/I\longrightarrow R/(I,x_i)\longrightarrow 0,$$
we have $\Ass(R/(I:x_i))\subseteq\Ass(R/I)$. Since $I\subseteq (I:x_i)$, it follows that $(I:x_i)$ is unmixed and $\height(I)=\height(I:x_i)$ for all $1\leq i\leq n$.\\
$(\Longleftarrow)$ let $\frak{p}\in\Ass(R/I)$.  By using \cite[Lemma 11]{T}, we have $\Ass(R/I)=\cup_{i=1}^n\Ass(R/(I:x_i))$. Then $\frak{p}\in\Ass(R/(I:x_i))$ for some $i$ and so $\height(I:x_i)=\height(\frak{p})$. Therefore $\height(I)=\height(\frak{p})$ and so $I$ is an unmixed ideal, as required.
\end{proof}

\begin{Example}
Let $n=5$ and $I=(x_1x_3,x_1x_4,x_1x_5,x_2x_3,x_2x_4,x_2x_5)$. Then $I$ is a matroidal ideal such that all $(I:x_i)$ is an unmixed ideal for $1\leq i\leq 5$ but $I$ is not unmixid.
\end{Example}

The following result extends \cite[Lemma 2.2]{BJ}.
\begin{Proposition}\label{P1}
Let $I$ be a polymatroidal ideal of degree $d=2$. If $\Ass(R/I)=\min\Ass(R/I)$, then $I$ is a matroidal ideal or $I=\frak{m}^2$.
\end{Proposition}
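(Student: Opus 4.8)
The plan is to split on whether $I$ is square-free. If every element of $G(I)$ is square-free, then $I$ is a square-free polymatroidal ideal, i.e. a matroidal ideal, and there is nothing to prove. So assume some generator is a square; after relabelling, $x_1^2\in G(I)$. I will show that the hypothesis then forces $I=\frak{m}^2$. The engine of the argument is the single identity $(I:x_1)=\frak{m}$, from which $\frak{m}\in\Ass(R/I)$; the no-embedded-primes hypothesis collapses $R/I$ to dimension $0$, and a final application of the exchange property fills in all of $\frak{m}^2$.

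First I would prove that $x_1x_k\in I$ for every $k\neq 1$. Since $\supp(I)=[n]$, choose $w\in G(I)$ divisible by $x_k$ and write $w=x_kx_l$ (a product of two, possibly equal, variables). If $l=1$ we are done. Otherwise $l\neq 1$, and I apply the defining exchange property to the pair $u=w$, $v=x_1^2$ at the variable $x_l$: since $\deg_{x_l}(w)>0=\deg_{x_l}(x_1^2)$, there is an index $j$ with $\deg_{x_j}(x_1^2)>\deg_{x_j}(w)$ and $x_j(w/x_l)\in I$. As $x_1$ is the only variable occurring in $x_1^2$, the index $j$ is forced to equal $1$, and $x_1(w/x_l)=x_1x_k\in I$. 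I expect this to be the main obstacle, not because it is deep but because it is where the exchange property has to be steered: running the exchange out of $x_1^2$ toward a generic generator only produces \emph{some} $x_1x_{j'}$, whereas running it out of the generator $w$ that carries the target variable $x_k$ and cancelling the other variable $x_l$ is what pins down the specific product $x_1x_k$.

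With $x_1^2\in I$ and $x_1x_k\in I$ for all $k\neq 1$, every variable lies in $(I:x_1)$, so $(I:x_1)=\frak{m}$; since $\frak{m}$ is prime this exhibits $\frak{m}\in\Ass(R/I)$ (equivalently, feed the exact sequence $0\to R/(I:x_1)\to R/I\to R/(I,x_1)\to 0$ into the argument of Theorem \ref{T2}). The hypothesis $\Ass(R/I)=\min\Ass(R/I)$ now forces $\frak{m}\in\Min(R/I)$; as the maximal ideal can be a minimal prime only when it is the unique one, this gives $\sqrt{I}=\frak{m}$. Hence each $x_i\in\sqrt{I}$, and because $I$ is generated in degree $2$ the only way to have $x_i^N\in I$ is $x_i^2\in I$; thus $x_i^2\in G(I)$ for all $i$. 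Finally, for any $i\neq j$ the exchange property applied to $u=x_i^2$, $v=x_j^2$ at $x_i$ forces the exchanged index to be $j$ and yields $x_ix_j=x_j(x_i^2/x_i)\in I$. Therefore $G(I)$ contains every monomial of degree $2$, that is $I=\frak{m}^2$, which completes the proof.
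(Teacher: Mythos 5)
Your proof is correct, and it takes a genuinely different route from the paper's. The paper splits on whether $\frak{m}\in\Ass(R/I)$: if it is, the hypothesis makes $\frak{m}$ a minimal prime, so $\sqrt{I}=\frak{m}$, and the conclusion $I=\frak{m}^2$ is then quoted from \cite[Lemma 2.2]{BH}; if it is not, then $I=(I:\frak{m})=\bigcap_{i=1}^n(I:x_i)$, and by \cite[Theorem 1.1]{BH} each $(I:x_i)$ is a polymatroidal ideal of degree $1$, hence generated by variables, so $I$ is an intersection of square-free ideals and is therefore matroidal (note this second case never invokes the hypothesis at all). You split instead on whether $I$ is square-free, and in the non-square-free case you work everything out of the raw exchange axiom: the exchange steered out of $w=x_kx_l$ against $x_1^2$ (where the exchanged index is forced to be $1$) gives $x_1x_k\in I$ for all $k$, hence $(I:x_1)=\frak{m}$ and $\frak{m}\in\Ass(R/I)$; the hypothesis then yields $\sqrt{I}=\frak{m}$, so every $x_i^2\in G(I)$, and a second exchange between $x_i^2$ and $x_j^2$ fills in $\frak{m}^2\subseteq I\subseteq\frak{m}^2$. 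In effect, your non-square-free case is the contrapositive of the paper's second case, and your final step reproves the $d=2$ instance of \cite[Lemma 2.2]{BH} by hand. What each approach buys: yours is completely self-contained, citing nothing beyond the definition of a polymatroidal ideal, and it makes explicit where full support and the degree-$2$ hypothesis are used; the paper's is shorter, and its machinery (colons by variables stay polymatroidal, radical equal to $\frak{m}$ forces a power of $\frak{m}$) is what would generalize to degree $d>2$. All of your individual steps check out, including the one you flagged as delicate.
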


\begin{proof}
If $\frak{m}\in\Ass(R/I)$, then $\sqrt{I}=\frak{m}$. Thus by using \cite[Lemma 2.2]{BH} we have $I=\frak{m}^2$.
If $\frak{m}\notin\Ass(R/I)$, then $I=(I:\frak{m})$ and so $I=\cap_{i=1}^n(I:x_i)$. By \cite[Theorem 1.1]{BH}, $(I:x_i)$ is polymatroidal of degree $d=1$. Therefore all $(I:x_i)$ are square-free and so $I$ is square-free. This completes the proof.
\end{proof}

\begin{Example}
Let $n=3$ and $I=(x_1x_2,x_1x_3)$. Then $I$ is a matroidal ideal of degree $2$ such that $\Ass(R/I)=\min\Ass(R/I)$ but $I$ is not unmixed.
\end{Example}

\begin{Example}
Let $n=3$ and $I=(x_1^2x_2,x_1^2x_3)$. Then $I$ is a polymatroidal ideal of degree $d=3$ such that $\Ass(R/I)=\min\Ass(R/I)$ but neither $I$ is matroidal and nor $I=\frak{m}^3$.
\end{Example}
\section{Arithmetical rank of matroidal ideals}

We start this section by following lemma which is proved by Schmitt and Vogel in \cite{SV}.

\begin{Lemma}\label{L2}
Let $P$ be a finite subset of $R$ and let $P_0,P_1,...,P_r$ be subsets of $P$ such that the following conditions hold:
\begin{itemize}
\item[(a)] $\cup_{i=0}^rP_i=P$;
\item[(b)] $P_0$ has exactly one element;
\item[(c)] If $p$ and $p''$ are different elements of $P_i$ ($0<i\leq r$), then there is an integer $j$ with $0\leq j<i$ and an element $p'\in P_j$ such that $pp''\in(p')$.
\end{itemize}
Let $q_i=\sum_{p\in P_i}p$. Then $\sqrt{(P)}=\sqrt{(q_0,...,q_r)}$.
\end{Lemma}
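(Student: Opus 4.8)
The plan is to prove the set equality $V\big((q_0,\dots,q_r)\big)=V\big((P)\big)$ of vanishing loci, which immediately yields the claimed equality of radicals, since $\sqrt{I}=\bigcap_{\mathfrak{p}\in V(I)}\mathfrak{p}$ in our Noetherian setting. One inclusion is free: because $\cup_{i=0}^r P_i=P$ by (a), each $q_i=\sum_{p\in P_i}p$ is an $R$-combination of elements of $P$, so $(q_0,\dots,q_r)\subseteq(P)$ and hence $V\big((P)\big)\subseteq V\big((q_0,\dots,q_r)\big)$.

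For the reverse inclusion I would fix a prime $\mathfrak{p}\supseteq(q_0,\dots,q_r)$ and show that every $p\in P$ lies in $\mathfrak{p}$, proceeding by induction on the index $i$ to prove that all elements of $P_i$ belong to $\mathfrak{p}$. The base case uses (b): since $P_0=\{p_0\}$, we have $p_0=q_0\in\mathfrak{p}$. For the inductive step, assume every element of $P_0\cup\dots\cup P_{i-1}$ lies in $\mathfrak{p}$. The role of condition (c) is to control the pairwise products inside $P_i$: for distinct $p,p''\in P_i$ it provides $p'\in P_j$ with $j<i$ and $pp''\in(p')$, so by the inductive hypothesis $p'\in\mathfrak{p}$ and therefore $pp''\in\mathfrak{p}$. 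Now fix any $p\in P_i$ and multiply the relation $q_i\in\mathfrak{p}$ by $p$, giving
$$p\,q_i=p^2+\sum_{p''\in P_i,\ p''\neq p}p\,p''.$$
Every cross term on the right lies in $\mathfrak{p}$ by the previous observation, and the left side lies in $\mathfrak{p}$, so $p^2\in\mathfrak{p}$; since $\mathfrak{p}$ is prime this forces $p\in\mathfrak{p}$. This closes the induction, and because $\cup_i P_i=P$ we conclude $(P)\subseteq\mathfrak{p}$, as desired.

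The main obstacle will be arranging the inductive step so that condition (c) does its job cleanly: the point is that (c) only ever needs to be applied to a pair lying in a single block $P_i$, and it reduces such a product to a multiple of an element of a strictly earlier block, so the induction on $i$ is well-founded. The case $\abs{P_i}=1$ requires no argument, since then $q_i$ is itself the single element of $P_i$ and lies in $\mathfrak{p}$ directly. Finally, it is precisely the prime-ideal formulation that lets us pass from $p^2\in\mathfrak{p}$ to $p\in\mathfrak{p}$, avoiding the bookkeeping that would be needed to exhibit an explicit power of each $p$ inside the ideal $(q_0,\dots,q_r)$.
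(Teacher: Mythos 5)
Your proof is correct. Note, however, that the paper itself offers no proof of this lemma: it is stated as a known result quoted from Schmitt and Vogel \cite{SV}, so there is no in-paper argument to compare yours against. Your route --- showing that every prime $\mathfrak{p}\supseteq(q_0,\dots,q_r)$ contains all of $P$, by induction on the block index $i$, and then invoking the description of the radical as the intersection of the primes containing the ideal --- is a clean and standard streamlining. It differs from Schmitt and Vogel's original argument, which is constructive: they exhibit, by the same kind of induction, an explicit power of each $p\in P_i$ (roughly of order $p^{2^i}$) lying in $(q_0,\dots,q_i)$, by expanding $pq_i=p^2+\sum_{p''\neq p}pp''$ and repeatedly substituting the relations supplied by condition (c). The constructive version yields explicit equations and exponents, which can matter in effective or characteristic-sensitive applications to set-theoretic intersections; your prime-ideal version trades that information for brevity, exactly as you note in your final paragraph. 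In both formulations the induction is well-founded for the same reason: condition (c) only ever refers a product of two distinct elements of $P_i$ back to a single element of a strictly earlier block, so the base case $P_0=\{q_0\}$ anchors everything.
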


\begin{Lemma}\label{L3}
Let $I$ be a square-free monomial ideal of $R$. If $d=\min\{\deg(u): u\in G(I)\}\geq 2$, then $ara(I)\leq {n-d+1}$.
\end{Lemma}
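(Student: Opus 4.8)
The plan is to apply Lemma \ref{L2} to a suitably enlarged generating set of $I$, organized by degree. Since $I$ is square-free we have $\sqrt{I}=I$, so it suffices to produce monomials $q_0,\dots,q_r$ with $r+1\le n-d+1$ and $\sqrt{(q_0,\dots,q_r)}=I$; the content of Lemma \ref{L2} is precisely that such $q_i$ exist once a partition with properties (a)--(c) is exhibited.

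First I would take $P$ to be the $\lcm$-closure of the minimal generators, $P=\{\lcm(u_1,\dots,u_t):t\ge 1,\ u_1,\dots,u_t\in G(I)\}$. Every element of $P$ is a square-free monomial lying in $I$ (it is a multiple of each $u_i$ it is built from), so $(P)\subseteq I$, and since $G(I)\subseteq P$ we get $(P)=I$ and hence $\sqrt{(P)}=I$. The key numerical point is that each $w\in P$ satisfies $d\le\deg(w)\le n$, so $\deg(w)$ takes one of the $n-d+1$ values $d,d+1,\dots,n$.

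Next I would partition $P$ by degree: list the non-empty fibers of $w\mapsto\deg(w)$ in order of \emph{decreasing} degree as $P_0,P_1,\dots,P_r$, so that automatically $r+1\le n-d+1$ (skipping empty degree-classes is what lets me drop any full-support hypothesis). Condition (a) of Lemma \ref{L2} is then immediate. For (b) I would show the top class $P_0$ is a singleton: two distinct square-free monomials of equal degree are incomparable for divisibility, so their least common multiple lies in $P$ and has strictly larger degree, which is impossible when that degree is already maximal. For (c), given distinct $p,p''\in P_i$ with $i>0$, they have equal degree and so are again incomparable; hence $p':=\lcm(p,p'')\in P$ has strictly larger degree and therefore sits in some $P_j$ with $j<i$, and since $p'=\prod_{x\in\supp(p)\cup\supp(p'')}x$ divides $pp''$ we have $pp''\in(p')$.

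With (a)--(c) in hand, Lemma \ref{L2} yields $\sqrt{I}=\sqrt{(P)}=\sqrt{(q_0,\dots,q_r)}$ for $q_i=\sum_{p\in P_i}p$, so that $ara(I)\le r+1\le n-d+1$, as claimed. The one step that carries the whole argument — and where I would be most careful — is the monotonicity observation used twice above: passing from two distinct square-free monomials of the same degree to their least common multiple strictly increases the degree. This single fact simultaneously forces the maximal-degree class to be a lone element (giving (b)) and guarantees that each required divisor $p'$ falls in an earlier block of the partition (giving (c)); the remainder is routine bookkeeping about degrees and supports.
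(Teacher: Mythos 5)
Your proof is correct and follows essentially the same route as the paper: both apply the Schmitt--Vogel Lemma \ref{L2} to a collection of square-free monomials of $I$ partitioned by degree, listed in decreasing order so that the top class is a singleton, with the $\lcm$ of two equal-degree monomials supplying the element $p'$ needed for condition (c). The only difference is cosmetic: the paper takes the classes to be \emph{all} square-free monomials of $I$ of each degree $d,\dots,n$ (so the top class is $\{x_1\cdots x_n\}$), while you take the $\lcm$-closure of $G(I)$ and skip empty degree fibers; your write-up also verifies conditions (a)--(c) explicitly, which the paper leaves implicit.
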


\begin{proof}
Let $M_i=\{u\in I:$ \ \  $u$ is a square-free element and $\deg(u)=d+i-1\}$ for all $i=1,2,...,n-d+1$. Then $M_{n-d+1}=\{x_1...x_n\}$.
Now, we put $P_{n-d+1-i}=M_{i}$ for all $i=1,2,...,n-d+1$. Since $\sqrt{I}=\sqrt{(\cup_{j=0}^{n-d}P_j)}$, by using Lemma \ref{L2} we have $ara(I)\leq n-d+1$.
\end{proof}

The following result answer to the conjecture that made by H. J. Chiang-Hsieh in \cite[Conjecture]{C}.
\begin{Theorem}\label{T3}
Let $I$ be a mtroidal ideal of degree $d$. Then $ara(I)=n-d+1$.
\end{Theorem}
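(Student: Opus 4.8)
The plan is to sandwich $ara(I)$ between an upper and a lower bound that already coincide at $n-d+1$. The upper bound I would take straight out of the Schmitt--Vogel machinery packaged in Lemma \ref{L3}, and the lower bound from the projective-dimension inequality recalled in the Introduction together with the known value of $\pd(R/I)$ for a matroidal ideal. Before anything else I would dispose of the degenerate case $d=1$: a full-supported matroidal ideal generated in degree $1$ must be generated by all the variables, so $I=(x_1,\dots,x_n)=\frak{m}$, whence $ara(I)=\height(I)=n=n-d+1$ and the formula holds. So from here on assume $d\geq 2$.

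For the upper bound, I would use that a matroidal (indeed any polymatroidal) ideal is by definition generated in a single degree, so every $u\in G(I)$ has $\deg(u)=d$ and therefore $\min\{\deg(u):u\in G(I)\}=d\geq 2$. Since $I$ is square-free, Lemma \ref{L3} applies directly and gives $ara(I)\leq n-d+1$.

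For the lower bound, I would invoke the chain $\height(I)\leq\pd(R/I)\leq ara(I)$ valid for square-free monomial ideals, recalled in the Introduction (resting on \cite{L} and the Auslander--Buchsbaum formula). The only additional input is the value $\pd(R/I)=n-d+1$ for a matroidal ideal of degree $d$, which is \cite[Theorem 2.5]{C}. Substituting this yields $n-d+1=\pd(R/I)\leq ara(I)$.

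Combining the two displayed inequalities forces $ara(I)=n-d+1$, completing the argument. I do not expect a genuine obstacle at this stage: all the real work has already been absorbed into Lemma \ref{L3} (the explicit choice of the subsets $P_i$ feeding the Schmitt--Vogel Lemma \ref{L2}) and into the cited computation of the projective dimension, so what remains is a two-line comparison of bounds. The single point demanding care is that Lemma \ref{L3} presupposes $\min\{\deg(u)\}\geq 2$, which is precisely why the case $d=1$ has to be treated separately by hand rather than through that lemma.
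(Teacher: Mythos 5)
Your proof is correct and follows essentially the same route as the paper: the upper bound $ara(I)\leq n-d+1$ from Lemma \ref{L3}, combined with the lower bound $\pd(R/I)\leq ara(I)$ from \cite{L} and the value $\pd(R/I)=n-d+1$ from \cite[Theorem 2.5]{C}. Your separate treatment of the case $d=1$ is a small but worthwhile refinement, since the paper invokes Lemma \ref{L3} without remarking on its hypothesis $\min\{\deg(u)\}\geq 2$.
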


\begin{proof}
By Lemma \ref{L3} and \cite{L}, we have $\pd(R/I)\leq ara(I)\leq {n-d+1}$.
Since $I$ is a matroidal ideal of degree $d$, we have $\pd(R/I)=n-d+1$. Therefore $ara(I)=n-d+1$, as required.
\end{proof}

\begin{Corollary}
Let $I$ be a matroidal ideal of degree $d$. Then $I$ is square-free Veronese if and only if $I$ is set-theoretic complete intersection.
\end{Corollary}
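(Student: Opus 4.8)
The plan is to reduce the property of being a set-theoretic complete intersection to Cohen--Macaulayness and then invoke the Herzog--Hibi classification of Cohen--Macaulay polymatroidal ideals recalled in the introduction. The whole argument is a bookkeeping of facts already available, with Theorem \ref{T3} supplying the crucial numerical input.

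First I would recall the chain $\height(I)\leq\pd(R/I)\leq ara(I)$, valid for any square-free monomial ideal, together with the equivalence that $I$ is Cohen--Macaulay exactly when $\height(I)=\pd(R/I)$. Since $I$ is matroidal of degree $d$, we have $\pd(R/I)=n-d+1$ by \cite[Theorem 2.5]{C}, and by Theorem \ref{T3} we also have $ara(I)=n-d+1$. Hence $\pd(R/I)=ara(I)$ unconditionally, so the defining equality $ara(I)=\height(I)$ of a set-theoretic complete intersection becomes $\height(I)=\pd(R/I)$. Therefore $I$ is a set-theoretic complete intersection if and only if $I$ is Cohen--Macaulay, and the statement reduces to showing that a matroidal ideal is Cohen--Macaulay precisely when it is square-free Veronese.

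Second I would translate this Cohen--Macaulay condition into the Veronese statement using the Herzog--Hibi theorem quoted in the introduction, by which a polymatroidal ideal is Cohen--Macaulay exactly when it is a principal ideal, a Veronese ideal, or a square-free Veronese ideal. Because $I$ is matroidal it is square-free, so the first two alternatives must be re-examined: a full-supported square-free principal ideal must equal $(x_1\cdots x_n)$, which is the square-free Veronese ideal of degree $n$; and a Veronese ideal $\frak{m}^t$ is square-free only for $t=1$, in which case it is $\frak{m}$, the square-free Veronese ideal of degree $1$. Thus every Cohen--Macaulay matroidal ideal is in fact square-free Veronese, while the converse is immediate since a square-free Veronese ideal is Cohen--Macaulay by the same theorem and hence a set-theoretic complete intersection by the first step.

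No serious obstacle arises; the only point demanding care is the reduction just described, namely verifying that the principal and non-square-free Veronese options in the Herzog--Hibi list cannot occur for a genuine full-supported matroidal ideal except in the degenerate guises that are themselves square-free Veronese, so that the classification collapses to the single square-free Veronese case.
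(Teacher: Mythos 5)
Your proof is correct and follows essentially the same route as the paper: both reduce set-theoretic complete intersection to Cohen--Macaulayness via the chain $\height(I)\leq\pd(R/I)\leq ara(I)$ together with Theorem \ref{T3}, and then invoke the Herzog--Hibi classification. The only cosmetic difference is that the paper cites \cite[Theorem 4.2]{HH2} directly for ``Cohen--Macaulay matroidal implies square-free Veronese,'' whereas you derive this from the general polymatroidal classification by checking that the principal and Veronese cases degenerate to square-free Veronese ideals --- a detail the paper delegates to the reference.
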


\begin{proof}
If $I$ is set-theoretic complete intersection, then $I$ is Cohen-Macaulay and so by \cite[Theorem 4.2]{HH2} $I$ is square-free Veronese.
Conversely, let $I$ be square-free Veronese. Then $I$ is Cohen-Macaulay and so $\height(I)=\pd(R/I)$. By Theorem \ref{T3}, we have $\height(I)=ara(I)$, as required.
\end{proof}

\subsection*{Acknowledgements}
We would like to thank deeply grateful to Professors Margherita Barile and Kyouko Kimura for useful discussions.

%%%%%%%%%%%%%%%%%%%%%%%%%%%%%%%%%%%%%%%%%%%%%%%%%%%%%%%%%%%%%%%%%%%%%%%%%%%%%

\end{document}